\documentclass[10pt,a4paper]{amsart}
\usepackage{amssymb,mathtools}
\usepackage[margin=1in]{geometry}
\usepackage{extarrows}
\usepackage[all]{xy}
\usepackage{booktabs,tabularx,array,enumitem,microtype}
\usepackage[hidelinks]{hyperref}
\hypersetup{pdftitle={A note on inverse limits of injective modules},
  pdfauthor={Chencheng Zhang},
  pdfkeywords={inverse limit, injective module, cotorsion module, Noetherian ring, hereditary ring}}
\setlist{nosep,leftmargin=2.2em}
\allowdisplaybreaks[2]
\theoremstyle{plain}
\newtheorem*{theorema}{Theorem A}
\newtheorem*{theoremb}{Theorem B}
\newtheorem*{theoremc}{Theorem C}
\newtheorem{lemma}{Lemma}[section]
\newtheorem{proposition}[lemma]{Proposition}

\theoremstyle{remark}
\newtheorem{remark}[lemma]{Remark}

\begin{document}

\title[Inverse limits of injective modules]{A note on inverse limits of injective modules}
\author{Chencheng Zhang}
\thanks{The author was supported by the National Natural Science Foundation of China (No.~12131015).}
\address{School of Mathematical Sciences, Shanghai Jiao Tong University, Shanghai 200240, P. R. China}
\email{zhangchencheng@sjtu.edu.cn}
\subjclass[2020]{Primary 16D50, 13C11; Secondary 18A30, 13D05, 13F05}
\keywords{inverse limit, injective module, cotorsion module, Noetherian ring, hereditary ring}
\date{}

\begin{abstract}
  Let $R$ be a ring and $M$ a left $R$-module.
  We prove that $M$ is an $\omega_1$-indexed inverse limit of injective modules with split epimorphic connecting maps, 
  and $M$ is also an $\omega$-indexed inverse limit of a stationary tower defined by a split monic endomorphism of one injective module.
  These constructions answer two questions of Bergman.
  We then study the class of direct summands of $\omega$-indexed inverse limit of injective modules with epimorphic connecting maps.
  Over a commutative Noetherian ring $R$, they are precisely the Enochs cotorsion modules if and only if $\operatorname{gldim}R\leq1$ and $\operatorname{Spec}R$ is countable.
\end{abstract}

\maketitle

\section{Introduction}\label{sec:introduction}

Throught, all rings are unital and associative, but not necessary commutative.
All modules are unital left modules.
For an index set $I$, write $M^{(I)}$ for the direct sum of $|I|$ copies of $M$, and $M^I$ for the direct product.
Let $\omega$ and $\omega_1$ denote the first infinite ordinal, and the first uncountable ordinal, respectively.

\medskip

Bergman shows that every module is an inverse limit of injective modules with monomorphic connecting maps; moreover, for left Noetherian rings, every module is an inverse limit of injective modules with epimorphic connecting maps \cite[Theorems~2 and~4]{Bergman}.
His Questions~5 ask whether such epimorphic representation can be obtained without the Noetherian hypothesis, even over $\omega_1$; Questions~6 asks whether such monomorphic representation can then be obtained without the Noetherian hypothesis, even over $\omega$.

\medskip

The questions admit affirmative answers, as shown in Theorems~A and~B below.

\begin{theorema}
Let $R$ be a ring and $M$ a left $R$-module.
Then there is an inverse system $(I_\alpha,p_{\alpha\beta})_{\alpha\leq\beta<\omega_1}$ of injective left $R$-modules such that every $p_{\alpha\beta}$ is a split epimorphism and $M\cong\varprojlim_{\alpha<\omega_1} I_\alpha$.
\end{theorema}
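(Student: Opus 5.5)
The plan is to build the system from an injective copresentation of $M$, and to let the uncountable cofinality of $\omega_1$ force every thread to land in $M$. Choose an exact sequence $0\to M\to E_0\xrightarrow{f}E_1$ with $E_0,E_1$ injective, for instance $E_0=E(M)$ and $E_1=E(E_0/M)$. Each $I_\alpha$ will have the form $E_0\oplus(\text{a product of copies of }E_1)$. Each $p_{\alpha\beta}$ will be the identity on $E_0$, up to a shear by $f$, and a coordinate projection on the $E_1$-part. A map of this kind is a split epimorphism whose kernel is a product of copies of $E_1$, so all $I_\alpha$ are injective and all $p_{\alpha\beta}$ split. The real content is the choice of shears. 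They should make the $E_0$-component $e$ of any thread satisfy $f(e)=0$, and make the $E_1$-components of a thread determined by $e$.

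\emph{Step 1 (successor stages).} At each successor step I would attach one new coordinate $y_{\alpha}\in E_1$, with connecting map
\[
(e,\dots,y_\alpha,y_{\alpha+1})\mapsto(e,\dots,y_\alpha+f(e)).
\]
Along a thread this forces $y_{\alpha}=y_{\alpha+1}+f(e)$. The $E_1$-coordinates therefore move by $f(e)$ at every successor step.

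\emph{Step 2 (limit stages).} This is where the system is deliberately not continuous. At a limit $\lambda$, $I_\lambda$ is not taken to be $\varprojlim_{\alpha<\lambda}I_\alpha$. Instead it gets a fresh coordinate, and its map to earlier stages is a split epimorphism that pins this coordinate to earlier ones, say by requiring it to equal $y_0$. This is still compatible with all $p_{\alpha\beta}$ for $\alpha<\lambda$. Descending along an $\omega$-sequence cofinal in $\lambda$, the successor relations accumulate shifts by $f(e)$, while the limit coordinate insists that no net shift has occurred.

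\emph{Step 3 (identifying the limit).} For a thread, I would show by transfinite induction on the pinned coordinates that $f(e)=0$, so $e\in M$, and that all $y$'s are then determined by $e$. Conversely, every $e\in M$ extends uniquely to a thread. This gives $\varprojlim I_\alpha\cong M$.

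The main obstacle is Step 2: choosing the limit-stage maps so that they are genuinely split epimorphisms, are compatible with the successor shears, and still yield a relation that forces $f(e)=0$. A naive ``$+f(e)$ per step'' is absorbed at limits, because one can choose the limit coordinate freely, and then nonzero threads survive. To fix this I would keep, at each limit $\lambda$, a coordinate recording the difference between the limit coordinate and $y_0$. I would then arrange the shears so that this difference must be a fixed multiple of $f(e)$, and so that it is killed at the next limit. Uncountable cofinality enters only through the fact that a thread must pass through $\omega_1$ many such limit stages, each imposing a relation. It suffices that one such relation reads $f(e)=0$ once the free coordinates are eliminated. If this bookkeeping fails, the fallback is to replace $E_1$ by $E_1^{\mathbb N}$ and use Bergman-style shift maps. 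Their split-epi property is clear, and a thread would be forced to be eventually constant, hence to come from $M$.
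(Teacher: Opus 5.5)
Your proposal has a genuine gap at its core. Step~2 is never specified, and the framework you commit to cannot be completed.

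Take, as you describe, $I_\alpha=E_0\oplus E_1^{S_\alpha}$ and $p_{\alpha\beta}(e,y)=\bigl(e,\;y|_{S_\alpha}+g_{\alpha\beta}(e)\bigr)$. Here $g_{\alpha\beta}\colon E_0\to E_1^{S_\alpha}$ is an arbitrary shear (e.g.\ multiples of $f$ in various coordinates), and the $E_1$-part is a coordinate projection. Such a projection is surjective only along an injection of index sets, so you may assume $S_\alpha\subseteq S_\beta$. The relation $p_{\alpha\beta}p_{\beta\gamma}=p_{\alpha\gamma}$ forces the cocycle identity $g_{\alpha\gamma}=g_{\beta\gamma}|_{S_\alpha}+g_{\alpha\beta}$.

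Now fix \emph{any} $e\in E_0$. For each coordinate $s$, let $\alpha_s$ be the first stage with $s\in S_{\alpha_s}$, pick $c_s\in E_1$ freely, and put $y_\beta(s)=c_s-g_{\alpha_s\beta}(e)(s)$ for $\beta\geq\alpha_s$. The cocycle identity says precisely that $(e,y_\beta)_{\beta<\omega_1}$ is a thread. So the coordinates decouple and every $e$ lifts. Hence $\varprojlim I_\alpha$ is an extension of $E_0$ by $E_1^{S}$, with $S=\bigcup_\alpha S_\alpha$, and is therefore injective.

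No choice of shears, pinned limit coordinates or difference coordinates escapes this. A coordinate, once introduced, persists at all later stages, so it cannot be ``killed at the next limit''. The $E_1^{\mathbb N}$ shift fallback is again a coordinate projection. Moreover, split epimorphic towers of injectives over countable index sets only have injective limits, so no $\omega$-type mechanism helps.

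Your hope that ``one relation reads $f(e)=0$'' is also misplaced. Every $p_{0\lambda}$ is surjective and preserves $e$, so every $e$ survives to every stage $\lambda<\omega_1$. The constraint can only come from the impossibility of continuing a thread through all of $\omega_1$ at once.

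What is missing is an $\omega_1$-indexed system with split epimorphic transition maps and \emph{zero} limit. It must carry compatible split epimorphisms onto the target of the copresentation, and its transition maps must mix coordinates rather than project them.

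The paper obtains it from a set-theoretic system with surjective maps and empty limit. $X_\alpha$ is the set of injections $\alpha\to\omega$ with infinite complement; restriction is onto, and a thread would inject $\omega_1$ into $\omega$. It linearizes with finite supports, $V_\alpha=D^{(X_\alpha)}$, with transition maps that \emph{sum over fibres} of the restrictions. These are split by sections of the restrictions and are compatible with the augmentations $\varepsilon_\alpha\colon V_\alpha\to D$. Direct sums rather than products are essential here: fibres are infinite, so fibre sums need finite support.

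A support-counting argument using uncountable cofinality turns a nonzero thread of the $V_\alpha$ into a thread of the $X_\alpha$, so $\varprojlim V_\alpha=0$. Finally, with $f\colon E\to\mathsf C(D)$ a copresentation of $M$, the paper sets $I_\alpha=E\times_{\mathsf C(D)}\mathsf C(V_\alpha)$. In a thread, the $\mathsf C(V_\alpha)$-components lie in $\varprojlim\mathsf C(V_\alpha)\cong\mathsf C(\varprojlim V_\alpha)=0$, which forces $f(e)=0$.
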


\begin{theoremb}
Let $R$ be a ring and $M$ a left $R$-module.
Then there are an injective left $R$-module $I$ and a split monomorphism $\sigma:I\longrightarrow I$ such that $M\cong\varprojlim\bigl(I\xlongleftarrow{\sigma}I \xlongleftarrow{\sigma}I\xlongleftarrow{\sigma}\cdots\bigr) \cong\bigcap_{n<\omega}\sigma^n(I)$.
\end{theoremb}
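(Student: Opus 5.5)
The plan is to obtain $M$ as the kernel of a map between injective modules and then to build $I$ as a countable product, with $\sigma$ a "shift twisted by $f$". Two facts are used throughout. For any endomorphism $\sigma$ of $I$, the inverse limit of the stationary tower is the set of sequences $(y_n)_{n<\omega}$ in $I$ with $\sigma(y_{n+1})=y_n$. If $\sigma$ is monic, projecting to $y_0$ identifies this limit with $\bigcap_{n<\omega}\sigma^n(I)$, which gives the second isomorphism of Theorem~B at once. Moreover, a monomorphism out of an injective module always splits. So it suffices to find an injective module $I$ and a monomorphism $\sigma\colon I\to I$ with $\bigcap_n\sigma^n(I)\cong M$; the splitting is then automatic.

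For the construction, embed $M$ in an injective module $E_0$, and embed $E_0/M$ in an injective module $E_1$. This gives $f\colon E_0\to E_1$ with $\ker f=M$. Take $I=E_0\times E_1^{\omega}$, which is injective because products of injectives are injective. The model first attempt is $\sigma(e,(z_0,z_1,\dots))=(e,(f(e),z_0,z_1,\dots))$, which is clearly monic. One computes $\sigma^n(I)=\{(e,z): z_0=\dots=z_{n-1}=f(e)\}$. The intersection of these is $\{(e,z): z_i=f(e)\text{ for all } i\}\cong E_0$, not $M$. So this naive shift fails: the condition coming from $f$ is satisfiable by constant sequences, and the twist must be modified so that the only compatible threads are those with $f(e)=0$.

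The key step, and the main obstacle I expect, is to choose $\sigma$ so that the conditions imposed at the successive stages $\sigma^n(I)$ become mutually inconsistent unless $f(e)=0$, while each $\sigma^n(I)$ stays a direct summand isomorphic to $I$. My first attempt will use two coordinates of the product for each stage, say $I=E_0\times (E_1\times E_1)^{\omega}$. At stage $n$, the element $f(e)$ would be written into one new coordinate and its negative into another, and the shift would be arranged so that at the limit both $z_i=f(e)$ and $z_i=0$ are forced on a common coordinate. Concretely, $\sigma$ will be a lower-triangular operator on the product. Its diagonal is the identity on $E_0$ and a shift on $E_1^{\omega}$, and its off-diagonal term is $f$. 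Injectivity of such a $\sigma$ is then immediate from the triangular form. An alternative I would try in parallel is to apply the same construction to a longer injective coresolution $0\to M\to E_0\to E_1\to E_2$: one places $E_k$-factors in the product and uses the exactness $\operatorname{im}=\ker$ to kill the constant threads.

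Once a candidate $\sigma$ is fixed, the remaining verifications are routine. I will compute $\sigma^n(I)$ explicitly as the graph-type submodule given by the first $n$ coordinates, check that the intersection consists exactly of the elements $(e,0,0,\dots)$ with $e\in\ker f=M$, and conclude with the two identifications from the first paragraph.
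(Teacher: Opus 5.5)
Your two reductions are sound: for monic $\sigma$ the limit of the stationary tower is $\bigcap_n\sigma^n(I)$, and a monomorphism out of an injective module splits. Your computation that the naive shift on $E_0\times E_1^{\omega}$ gives $E_0$ is also correct. But the proposal stops exactly at the step that carries the theorem. You never produce a working $\sigma$, and the design you sketch cannot work.

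Suppose $\sigma(e,z)=(e,\phi(e)+Sz)$, where $S$ is a (block) shift on a countable product $P$ and $\phi\colon E_0\to P$ is any map built from $f$, with whatever doubled coordinates and signs. Then $S^k\phi(e)$ vanishes in the first $k$ coordinates. So $w(e)=\sum_{k\geq0}S^k\phi(e)$ is a well-defined element of $P$, since each coordinate is a finite sum, and $\sigma(e,w(e))=(e,w(e))$. These fixed points lie in every $\sigma^n(I)$. Since $\bigcap_nS^n(P)=0$, one gets $\bigcap_n\sigma^n(I)=\{(e,w(e)):e\in E_0\}\cong E_0$. This is injective, so it is never $M$ unless $M$ is injective.

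In other words, the condition imposed on each coordinate stabilizes after finitely many stages, so no two stages can force conflicting values. The same computation defeats your coresolution variant as long as $\sigma$ keeps this identity-plus-triangular-shift form on a product. It is the completeness of the product that manufactures the unwanted threads.

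The missing idea is to keep the naive shift but use a target in which the infinite constant thread does not exist: a direct sum instead of a product, while retaining injectivity. Over a non-Noetherian ring $E_1^{(\omega)}$ need not be injective. The paper therefore embeds $E/M$ (your $E_0/M$) into a divisible group $D$ and uses the coinduced module $\mathsf{C}(D^{(\omega)})=\operatorname{Hom}_{\mathbb Z}(R,D^{(\omega)})$. This module is injective because $D^{(\omega)}$ is divisible.

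Take $I=E\oplus\mathsf{C}(D^{(\omega)})$ and $\sigma(e,u)=(e,u')$ with $u'(r)=(f(e)(r),u(r)_0,u(r)_1,\ldots)$. An element $(e,u)$ of $\bigcap_n\sigma^n(I)$ must satisfy $u(r)_k=f(e)(r)$ for all $k$ and all $r\in R$. Finite support of $u(r)\in D^{(\omega)}$ then forces $f(e)=0$ and $u=0$, so $\bigcap_n\sigma^n(I)=M\oplus0$.
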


Bergman's Question~7 concerns countable systems with epimorphic maps.
For this question, write
\begin{align*}
  \mathcal{L}_R&=\left\{\varprojlim_{n<\omega}(I_n,p_n) \mid 
       I_n\text{ is an injective module},\quad p_n:I_{n+1}\longrightarrow I_n
       \text{ is surjective}\right\},\\
  \mathcal{C}_R&=\{C:C\text{ is a direct summand of some }L\in\mathcal{L}_R\}.
\end{align*}
These classes are understood up to isomorphism.
Throughout, \emph{cotorsion} means Enochs cotorsion:
\[
  \operatorname{Cot}(R)=\{C:\operatorname{Ext}^1_R(F,C)=0\text{ for every flat left }R\text{-module }F\}.
\]

\begin{theoremc}
Let $R$ be a nonzero commutative Noetherian ring.
Then
\[
  \mathcal{C}_R=\operatorname{Cot}(R)
  \quad\Longleftrightarrow\quad
  \operatorname{gldim}R\leq1
  \ \text{and}\ |\operatorname{Spec}R|\leq\aleph_0.
\]
In this case, every cotorsion $R$-module $C$ satisfies $D\oplus C\in\mathcal{L}_R$ for some injective $R$-module $D$.
\end{theoremc}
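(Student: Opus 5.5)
The plan is to prove the two implications separately, using throughout one basic exact sequence. For a tower $(I_n,p_n)$ of injective modules with surjective $p_n$, the Mittag-Leffler condition holds, so $\varprojlim^1=0$. This gives the short exact sequence
\[
  0\longrightarrow L\longrightarrow \prod_{n<\omega} I_n\xlongrightarrow{\ 1-\text{shift}\ }\prod_{n<\omega} I_n\longrightarrow 0 ,
\]
which is an injective resolution of length $\le 1$ of $L=\varprojlim I_n$. Consequently every $L\in\mathcal{L}_R$, and hence every $C\in\mathcal{C}_R$, has injective dimension at most $1$. I also want $\mathcal{C}_R\subseteq\operatorname{Cot}(R)$ in general. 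For this I would apply $\operatorname{Hom}_R(F,-)$ with $F$ flat to the tower and use $0\to\varprojlim^1\operatorname{Hom}(F,I_n)\to\operatorname{Ext}^1(F,L)\to\varprojlim\operatorname{Ext}^1(F,I_n)=0$. So I must show that $\operatorname{Hom}(F,I_{n+1})\to\operatorname{Hom}(F,I_n)$ is surjective, i.e.\ $\operatorname{Ext}^1(F,\ker p_n)=0$. Over a Noetherian ring, $\ker p_n$ is a kernel of an epimorphism of injectives, and I would check this vanishing using that flat modules are direct limits of finitely generated free modules and that injectives are $\Sigma$-injective. If this fails, I would instead argue directly with the displayed resolution.

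\emph{($\Rightarrow$).} Assume $\mathcal{C}_R=\operatorname{Cot}(R)$. Then every cotorsion module has injective dimension $\le1$. Every module $M$ embeds purely into a pure-injective (hence cotorsion) module, namely $M^{++}$. For finitely generated $N$ over a Noetherian ring, $N$ is finitely presented, so $\operatorname{Ext}^2(N,-)$ is compatible with pure embeddings into double character modules. This should give $\operatorname{Ext}^2(N,M)\hookrightarrow\operatorname{Ext}^2(N,M^{++})=0$, hence $\operatorname{gldim}R\le1$.

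For countability, I would use Enochs' structure theorem: flat cotorsion modules are products $\prod_{\mathfrak p}T_{\mathfrak p}$ of completions of free $R_{\mathfrak p}$-modules. By the Matlis decomposition, each $L\in\mathcal{L}_R$ is built from $\prod_n I_n$ with $I_n\cong\bigoplus_{\mathfrak p}E(R/\mathfrak p)^{(X_{n,\mathfrak p})}$. I would look for an invariant that distinguishes the two. One candidate is the set of primes $\mathfrak p$ at which the cotorsion module $\prod_{\mathfrak p\in\operatorname{Spec}R}\widehat{R_{\mathfrak p}}$ has a nonzero ``$\mathfrak p$-component''. I would aim to show that a summand of a countable inverse limit of injectives can only carry such components over a set of primes controlled by the countably many $I_n$, and that it cannot realise an uncountable family of independent completions. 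This is the step I expect to be the \textbf{main obstacle}, because the $I_n$ can themselves involve uncountably many primes. The first thing I would try is reducing modulo a suitable element or localizing so that the relevant $\operatorname{Spec}$ is one-dimensional, and then counting maximal ideals.

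\emph{($\Leftarrow$).} Assume $\operatorname{gldim}R\le1$ and $\operatorname{Spec}R$ countable, and let $C$ be cotorsion. Take an injective resolution $0\to C\to E^0\xrightarrow{\pi}E^1\to0$, where $E^1=\bigoplus_{\mathfrak p}E(R/\mathfrak p)^{(X_{\mathfrak p})}$ ranges over countably many primes. The goal is to find an injective $D$ and injectives $I_n$ such that $E^0\oplus\prod I_n\cong\prod I_n$ (an Eilenberg-swindle adjustment) and such that the map $1-\text{shift}$ is identified with $\pi\oplus\text{(iso)}$. Then $D\oplus C\cong\varprojlim I_n$ by the exact sequence above. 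I would choose an enumeration $\mathfrak p_0,\mathfrak p_1,\dots$ of the primes and let $I_n$ collect the summands at $\mathfrak p_0,\dots,\mathfrak p_n$, with large multiplicities, so that $\prod_n I_n$ absorbs both $E^0$ and $E^1$. The cotorsion hypothesis on $C$ should be exactly what lets $\pi$ be lifted through the product decomposition, via vanishing of $\operatorname{Ext}^1$ against the flat modules that arise as $\operatorname{Hom}$ between the injective pieces.
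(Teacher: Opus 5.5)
Your proposal has a genuine gap: it attaches the countability of $\operatorname{Spec}R$ to the wrong inclusion.

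\textbf{The inclusion $\mathcal{C}_R\subseteq\operatorname{Cot}(R)$ is not true in general.} You set out to prove it for every Noetherian ring, but the paper refutes this. For a Dedekind domain with uncountable spectrum (e.g.\ $\mathbb{C}[x]$), the module $B=R^\omega+K^{(\omega)}$ lies in $\mathcal{L}_R$ and satisfies $\operatorname{Ext}^1_R(K,B)\neq0$, so $B$ is not cotorsion.

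Your proposed mechanism also fails even where the conclusion holds. Surjectivity of $\operatorname{Hom}(F,I_{n+1})\to\operatorname{Hom}(F,I_n)$ is equivalent to $\operatorname{Ext}^1(F,\ker p_n)=0$. Over $\mathbb{Z}$, take the tower with $I_0=\mathbb{Q}/\mathbb{Z}$ and $I_n=\mathbb{Q}$ for $n\geq1$. Then $\ker p_0=\mathbb{Z}$ and $\operatorname{Ext}^1(\mathbb{Q},\mathbb{Z})\neq0$. Surjectivity is sufficient for $\varprojlim^1\operatorname{Hom}(F,I_n)\cong\operatorname{Ext}^1(F,L)$ to vanish, but it is not necessary.

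What the paper proves is this inclusion only under the hypothesis, and countability is exactly what it uses:
\begin{itemize}
\item Reduce to a Dedekind domain, using that a commutative Noetherian hereditary ring is a finite product of Dedekind domains and fields. You never make this reduction.
\item Cotorsion-ness is then tested by $\operatorname{Ext}^1(K,-)$ alone. This follows from heredity and the embedding $F\hookrightarrow K\otimes F\cong K^{(J)}$.
\item A countable spectrum makes $K$ a countable increasing union of finitely generated projective submodules.
\item A step-by-step lifting through the surjective tower then kills $\operatorname{Ext}^1(K,L)$.
\end{itemize}

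\textbf{The necessity argument looks in the wrong direction.} Your argument that $\operatorname{gldim}R\leq1$ is fine. It uses the pure embedding $M\to M^{++}$ together with $\operatorname{Ext}^2(N,-)$ for finitely generated $N$ over a Noetherian ring, which is a variant of the paper's Tor/character-module argument. But once $R$ is a product of Dedekind domains and fields, every cotorsion module already lies in $\mathcal{C}_R$, whatever the spectrum. So no cotorsion module, for instance $\prod_{\mathfrak p}\widehat{R_{\mathfrak p}}$, can witness $\mathcal{C}_R\neq\operatorname{Cot}(R)$, and your ``main obstacle'' cannot be overcome along the route you describe. The witness must instead be a member of $\mathcal{L}_R$ that is not cotorsion, such as $B$ above. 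Proving $\operatorname{Ext}^1(K,B)\neq0$ is the real content: it uses the primary decomposition of $K/R$ and a pigeonhole argument over uncountably many primes.

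\textbf{The stabilization step is not carried out.} Your Eilenberg-swindle sketch for $D\oplus C\in\mathcal{L}_R$ never constructs the needed identification of $1-\text{shift}$ with $\pi\oplus(\text{iso})$. The enumeration of primes plays no role there. The paper argues as follows:
\begin{itemize}
\item Take $F$ free with $F\to C$ surjective, and use that $C$ is cotorsion to extend this map over $F^\omega/F^{(\omega)}$.
\item Set $V=\ker(F^\omega\to C)$ and let $V_n\subseteq V$ consist of the elements vanishing in the first $n$ coordinates.
\item Put $D=K\otimes V$ and form $L=\varprojlim D/V_n$.
\item Then $L\cong D\oplus C$.
\end{itemize}
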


In particular, let $R$ be a Dedekind domain with fraction field $K$.
Then $\mathcal{C}_R=\operatorname{Cot}(R)$ if and only if $K$ is at most countably generated as an $R$-module.
The equality holds for every discrete valuation ring.

\medskip

For technical reasons, Theorem~C concerns the direct-summand closure $\mathcal{C}_R$, rather than the original class $\mathcal{L}_R$.
Indeed, for every prime $p$, the group $\mathbb{Z}/p\mathbb{Z}$ belongs to $\mathcal{C}_{\mathbb{Z}}\setminus\mathcal{L}_{\mathbb{Z}}$.

\section{Split presentations over arbitrary rings}\label{sec:presentations}

For an abelian group $A$, put $\mathsf{C}(A)=\operatorname{Hom}_{\mathbb{Z}}(R,A)$ the coinduced left $R$-module.
Then adjunction gives a natural isomorphism $\operatorname{Hom}_R(N,\mathsf{C}(A))\cong\operatorname{Hom}_{\mathbb{Z}}(N,A)$ for each $R$-module $N$.

\begin{lemma}
  If $A$ is divisible, then $\mathsf{C}(A)$ is injective as an $R$-module.
\end{lemma}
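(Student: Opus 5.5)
The plan is to reduce injectivity of $\mathsf{C}(A)$ to injectivity of $A$ over $\mathbb{Z}$, using the adjunction isomorphism stated just before the lemma. Recall that divisible abelian groups are exactly the injective $\mathbb{Z}$-modules, because $\mathbb{Z}$ is a principal ideal domain and Baer's criterion applies. So the hypothesis says that $\operatorname{Hom}_{\mathbb{Z}}(-,A)$ is exact on abelian groups.

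First I check that the isomorphism $\operatorname{Hom}_R(N,\mathsf{C}(A))\cong\operatorname{Hom}_{\mathbb{Z}}(N,A)$ is natural in $N$. Here $N$ on the right is regarded as an abelian group via restriction of scalars along $\mathbb{Z}\to R$. Explicitly, $f$ corresponds to $n\mapsto f(n)(1)$, and $g$ corresponds to $n\mapsto(r\mapsto g(rn))$. Naturality in $N$ is then immediate from these formulas. Thus the two functors $\operatorname{Hom}_R(-,\mathsf{C}(A))$ and $\operatorname{Hom}_{\mathbb{Z}}(-,A)\circ\mathrm{res}$ on $R$-modules are naturally isomorphic.

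Next, take a monomorphism $N'\hookrightarrow N$ of $R$-modules. Restriction of scalars is exact, so it remains a monomorphism of abelian groups. Since $A$ is injective over $\mathbb{Z}$, the map $\operatorname{Hom}_{\mathbb{Z}}(N,A)\to\operatorname{Hom}_{\mathbb{Z}}(N',A)$ is surjective. By naturality, $\operatorname{Hom}_R(N,\mathsf{C}(A))\to\operatorname{Hom}_R(N',\mathsf{C}(A))$ is surjective as well. This is exactly the statement that every $R$-map $N'\to\mathsf{C}(A)$ extends along $N'\hookrightarrow N$, so $\mathsf{C}(A)$ is injective.

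There is no real obstacle. The only point needing care is the naturality of the adjunction, which is what allows surjectivity of restriction maps to transfer between the two sides. One could also give a direct argument: extend the composite $N'\to\mathsf{C}(A)\xrightarrow{\mathrm{ev}_1}A$ to a $\mathbb{Z}$-map $N\to A$, then transport it back to an $R$-map $N\to\mathsf{C}(A)$ and check that it restricts correctly on $N'$.
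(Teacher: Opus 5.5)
Your proof is correct and follows the same route as the paper: divisible abelian groups are injective over $\mathbb{Z}$, and the natural adjunction isomorphism $\operatorname{Hom}_R(-,\mathsf{C}(A))\cong\operatorname{Hom}_{\mathbb{Z}}(-,A)$ transfers exactness, so $\mathsf{C}(A)$ is injective. Your explicit formulas for the adjunction and your check of naturality in $N$ make precise a step the paper leaves implicit.
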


\begin{proof}
  When $A$ is divisible, it is injective as an abelian group, and thus $\mathrm{Hom}_{\mathbb Z}(-, A) \cong \mathrm{Hom}_R(-, \mathsf{C}(A))$ is an exact functor.
  It follows that $\mathsf{C}(A)$ is injective as an $R$-module.
\end{proof}

Given an $R$-module $M$, choose an injective $R$-module $E$ containing $M$, and embed the underlying abelian group of $E/M$ into a divisible group $D$ by a map $j$, that is, $E\longrightarrow E/M\xlongrightarrow{j}D$.
By adjunction, such composition $E\longrightarrow D$ determines an $R$-linear map between injective modules with kernel $M$:
\begin{equation}\label{eq:copresentation}
  f:E\longrightarrow\mathsf{C}(D),\qquad
  f(e)(r)=j(re+M).
\end{equation}

\subsection{Proof of Theorem~A}

\begin{proof}[Proof of Theorem~A]
For $\alpha<\omega_1$, let $X_\alpha$ be the set of injections $\alpha\longrightarrow\omega$ with infinite complement.
Since every $\alpha<\omega_1$ is a countable ordinal, $X_\alpha$ is nonempty.
Restriction gives a surjection $\rho_{\alpha\beta}:X_\beta\longrightarrow X_\alpha$ for $\alpha\leq\beta<\omega_1$.
Note that $\varprojlim_{\alpha<\omega_1}X_\alpha=\emptyset$: a compatible family $(x_\alpha)_{\alpha<\omega_1}$ would give an injection $\bigcup_{\alpha<\omega_1}x_\alpha:\omega_1\longrightarrow\omega$, which is impossible.

\medskip

Put $V_\alpha=D^{(X_\alpha)}$, and $\varepsilon_\alpha:V_\alpha\longrightarrow D$ summing the coordinates. 
For $\alpha\leq\beta<\omega_1$, define the map
\[
u_{\alpha\beta}:V_\beta\longrightarrow V_\alpha,
\qquad
(v_t)_{t\in X_\beta}\longmapsto
\biggl(\sum_{\rho_{\alpha\beta}(t)=x}v_t\biggr)_{x\in X_\alpha}.
\]
Fix a section $s_{\alpha\beta}:X_\alpha\longrightarrow X_\beta$ of $\rho_{\alpha\beta}$.
It induces a homomorphism $t_{\alpha\beta}:V_\alpha\longrightarrow V_\beta$ given by
\[
\bigl(t_{\alpha\beta}(v)\bigr)_y=
\begin{cases}
v_x,&y=s_{\alpha\beta}(x)\text{ for some }x\in X_\alpha,\\
0,&y\notin s_{\alpha\beta}(X_\alpha).
\end{cases}
\]
Thus $u_{\alpha\beta}t_{\alpha\beta}=\mathrm{id}_{V_\alpha}$, so $u_{\alpha\beta}$ is a split epimorphism.
Moreover, $\varepsilon_\alpha u_{\alpha\beta}=\varepsilon_\beta$ and $\varepsilon_\beta t_{\alpha\beta}=\varepsilon_\alpha$.
Note that each $\varepsilon_\alpha$ is split epimorphic.

\medskip

We claim that $\varprojlim_{\alpha<\omega_1}V_\alpha=0$.
Let $(v_\alpha)_{\alpha<\omega_1}$ be a compatible family, so $u_{\alpha\beta}(v_\beta)=v_\alpha$ whenever $\alpha\leq\beta<\omega_1$.
Write $v_\alpha=((v_\alpha)_x)_{x\in X_\alpha}$ and put the finite set of nonzero coordinates of $v_\alpha$:
\[
T_\alpha=\operatorname{supp}(v_\alpha)
=\{x\in X_\alpha\mid (v_\alpha)_x\ne0\}.
\]
Compatibility gives $T_\alpha\subseteq\rho_{\alpha\beta}(T_\beta)$ and hence $|T_\alpha|\leq|T_\beta|$.
Note that $\sup _{\alpha < \omega_1}|T_\alpha|$ is finite; otherwise one can find an increasing indices $\alpha_n<\omega_1$ with $|T_{\alpha_n}|\geq n$, and then $|T_{\sup_{n<\omega}\alpha_n}|$ is infinite, a contradiction.
Hence $(|T_\alpha|)_{\alpha < \omega_1}$ is eventually constant.
On that tail, $\rho_{\alpha\beta}$ restricts to a bijection $T_\beta\longrightarrow T_\alpha$.
Hence $\varprojlim_{\alpha<\omega_1}X_\alpha$ contains a set of size $|T_\alpha|$.
Since $\varprojlim_{\alpha<\omega_1}X_\alpha=\emptyset$, all $T_\alpha$ are empty, which proves the claim.

\medskip

Now set the injective $R$-module $H_\alpha=\mathsf{C}(V_\alpha)$ and spilt epimorphism $q_\alpha=\mathsf{C}(\varepsilon_\alpha):H_\alpha\longrightarrow\mathsf{C}(D)$.
Since the right adjoint $\mathsf{C}$ preserves limits, $\varprojlim H_\alpha\cong \mathsf{C}(\varprojlim V_\alpha)=\mathsf{C}(0)=0$.
Using \eqref{eq:copresentation}, one has the pullbacks
\[
\xymatrix@C=4.5em@R=2em{
  I_\alpha \ar[r] \ar[d] & H_\alpha \ar[d]^-{q_\alpha} \\
  E \ar[r]^-{f} & \mathsf{C}(D)
}, \qquad I_\alpha=\{(e,h)\in E\oplus H_\alpha:f(e)=q_\alpha(h)\}.
\]
Since $\ker q_\alpha$ and $E$ are injective modules, their extension $I_\alpha$ is also an injective module.
The identity $\varepsilon_\alpha u_{\alpha\beta}=\varepsilon_\beta$ gives $q_\alpha\mathsf{C}(u_{\alpha\beta})=q_\beta$.
By universal property of pullbacks one has the $R$-linear maps
\[
p_{\alpha\beta}:I_\beta\longrightarrow I_\alpha,
\qquad (e,h)\longmapsto(e,\mathsf{C}(u_{\alpha\beta})(h)).
\]
Let $t_{\alpha\beta}:V_\alpha\longrightarrow V_\beta$ be the section of $u_{\alpha\beta}$ chosen above.
Since $\varepsilon_\beta t_{\alpha\beta}=\varepsilon_\alpha$, the map
\[
I_\alpha\longrightarrow I_\beta,
\qquad (e,h)\longmapsto(e,\mathsf{C}(t_{\alpha\beta})(h))
\]
is well defined and is a section of $p_{\alpha\beta}$.
In the inverse system $(I_\alpha)_{\alpha < \omega_1}$, a compatible family consists of the direct sum of some $e\in E$ and a compatible family $(h_\alpha)_{\alpha < \omega_1}$ in the $H_\alpha$, subject to $f(e)=q_\alpha(h_\alpha)$.
But $\varprojlim H_\alpha=0$, so every $h_\alpha$ is zero and the condition reduces to $f(e)=0$.
Thus $\varprojlim_{\alpha<\omega_1}I_\alpha\cong\ker f=M$.
\end{proof}

\begin{remark}\label{rem:countable-split}
The uncountable index in Theorem~A is necessary.
Indeed, one has the following results.
\begin{itemize}
  \item For countable ordinal $\lambda$ and a $\lambda$-indexed inverse limit of injective modules with split epimorphic connecting maps, say $(J_\alpha)_{\alpha < \lambda}$.
  Then $\varprojlim_{\alpha < \lambda} J_\alpha$ is again an injective module.
  \item For countable ordinal $\lambda$ and a $\lambda$-indexed inverse limit of injective modules with epimorphic connecting maps, say $(J'_\alpha)_{\alpha < \lambda}$.
  Then $\varprojlim_{\alpha < \lambda} J'_\alpha$ has injective dimension at most one.
\end{itemize}
\end{remark}

\subsection{Proof of Theorem~B}

\begin{proof}[Proof of Theorem~B]
Retain \eqref{eq:copresentation} and put the injective module $I=E\oplus\mathsf{C}(D^{(\omega)})$.
For $(e,u)\in I$, define
\begin{equation}\label{eq:insertion}
\sigma : I \longrightarrow I,\quad(e,u)\longmapsto(e,u'); \qquad u'(r)=\bigl(f(e)(r),u(r)_0,u(r)_1,\ldots\bigr) \quad(r\in R).  
\end{equation}
Then $\sigma$ is clearly an $R$-linear map with left inverse
\[
I \longrightarrow I,\quad(e,u)\longmapsto(e,u''); \qquad u''(r)=\bigl(u(r)_1, u(r)_2, u(r)_3,\ldots\bigr) \quad(r\in R). 
\]
Hence $\sigma$ is a split monomorphism.

\medskip

Iterating \eqref{eq:insertion} shows that $\bigcap_{n<\omega}\sigma^n(I)=M\oplus0$.
This proves the result.
\end{proof}

\section{The cotorsion criterion}\label{sec:cotorsion}

\subsection{The homological obstruction}

We first isolate a necessary condition on the ring.

\begin{lemma}\label{lem:dimension-obstruction}
For any ring $R$, every module in $\mathcal C_R$ has injective dimension at most one.

\medskip

Moreover, if $\operatorname{Cot}(R)\subseteq\mathcal C_R$, then $\operatorname{w.gl.dim}R\leq1$, i.e., every submodule of a flat module is flat.
\end{lemma}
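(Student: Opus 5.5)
For the first part, I take $L=\varprojlim_{n<\omega}(I_n,p_n)\in\mathcal L_R$ and use the Milnor exact sequence
\[
0\longrightarrow L\longrightarrow \prod_{n<\omega} I_n\xlongrightarrow{\ \Delta\ }\prod_{n<\omega} I_n\longrightarrow {\varprojlim}^1 I_n\longrightarrow 0,
\qquad \Delta\bigl((x_n)_n\bigr)=\bigl(x_n-p_n(x_{n+1})\bigr)_n .
\]
Because every $p_n$ is surjective, $\varprojlim^1 I_n=0$: given $(y_n)$, I solve $x_n-p_n(x_{n+1})=y_n$ recursively by choosing $x_0=0$ and then lifting $x_n-y_n$ through $p_n$ to get $x_{n+1}$. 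So $0\to L\to\prod I_n\to\prod I_n\to0$ is exact. Since products of injective modules are injective, this is an injective coresolution of $L$ of length one, and $\operatorname{id}L\leq1$. A direct summand $C$ of $L$ satisfies $\operatorname{Ext}^2_R(-,C)\subseteq\operatorname{Ext}^2_R(-,L)=0$, so $\operatorname{id}C\leq1$ as well. This step is routine. It is also the countable case of Remark~\ref{rem:countable-split}.

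For the second part, I assume $\operatorname{Cot}(R)\subseteq\mathcal C_R$. By the first part, every cotorsion module then has injective dimension at most one, so $\operatorname{Ext}^2_R(X,C)=0$ for every module $X$ and every cotorsion $C$. To reach flatness, I take a flat module $F$ and a submodule $K\subseteq F$, and put $Q=F/K$. Dimension shifting along $0\to K\to F\to Q\to0$ gives, for every cotorsion $C$,
\[
\operatorname{Ext}^1_R(K,C)\cong\operatorname{Ext}^2_R(Q,C)=0,
\]
using $\operatorname{Ext}^1_R(F,C)=\operatorname{Ext}^2_R(F,C)=0$. The vanishing of $\operatorname{Ext}^2_R(F,C)$ holds because higher Ext from flat modules into cotorsion modules vanishes: cosyzygies of cotorsion modules are cotorsion, since injectives are cotorsion and the class is closed under cokernels of monomorphisms by the same dimension shift. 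Hence $K\in{}^{\perp_1}\operatorname{Cot}(R)$.

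The main obstacle is the last step: deducing that $K$ is flat from $\operatorname{Ext}^1_R(K,C)=0$ for all cotorsion $C$. This is the statement that $(\mathcal F,\operatorname{Cot})$ is a cotorsion pair, i.e.\ ${}^{\perp_1}\operatorname{Cot}(R)=\mathcal F$, where $\mathcal F$ is the class of flat modules. I would prove it by hand: take $C=N^+=\operatorname{Hom}_{\mathbb Z}(N,\mathbb Q/\mathbb Z)$ for an arbitrary right $R$-module $N$. Such a character module is pure-injective and hence cotorsion. Then
\[
\operatorname{Ext}^1_R(K,N^+)\cong\operatorname{Tor}_1^R(N,K)^+ ,
\]
so its vanishing gives $\operatorname{Tor}_1^R(N,K)=0$ for all $N$, which means $K$ is flat. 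This also sidesteps invoking the flat cover theorem. Therefore every submodule of a flat module is flat, which is equivalent to $\operatorname{w.gl.dim}R\leq1$, and I would verify the equivalence via $\operatorname{Tor}_2^R(N,Q)\cong\operatorname{Tor}_1^R(N,K)$.
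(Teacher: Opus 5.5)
Your proof is correct and is essentially the paper's: the same difference-map resolution $0\to L\to\prod I_n\to\prod I_n\to 0$ for the first part, and for the second part the same use of character modules $N^+$ (cotorsion, hence of injective dimension at most one under the hypothesis) together with $\operatorname{Ext}^j_R(X,N^+)\cong\operatorname{Tor}^R_j(N,X)^+$. The paper reads this off in degree two to get $\operatorname{Tor}^R_2(N,X)=0$ directly, whereas you first dimension-shift to $\operatorname{Tor}^R_1(N,K)=0$ for $K$ inside a flat module; your side justification of $\operatorname{Ext}^2_R(F,C)=0$ via closure of cotorsion modules under cosyzygies is circular as phrased, but harmless, because it already follows from $\operatorname{id}C\leq1$, and only $\operatorname{Ext}^1_R(F,C)=0$ is needed for $\operatorname{Ext}^1_R(K,C)\hookrightarrow\operatorname{Ext}^2_R(Q,C)$.
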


\begin{proof}
For $L=\varprojlim(I_n,p_n)\in\mathcal L_R$, the difference map gives an exact sequence
\[
  0\longrightarrow L\longrightarrow\prod_n I_n
  \xlongrightarrow{\delta}\prod_n I_n\longrightarrow0,
  \qquad \delta((x_n))=(x_n-p_nx_{n+1}).
\]
Since each $p_n$ is surjective, a preimage under $\delta$ can be chosen recursively.
Products of injective modules are injective, so the displayed sequence is an injective resolution of $L$.
So $\operatorname{id}_R L\leq1$.
Note that any direct summand of $L$ has injective dimension at most one, so $\operatorname{id}_R C\leq1$ for every $C\in\mathcal C_R$.

\medskip

For a right $R$-module $N$, let $N^+=\operatorname{Hom}_{\mathbb Z}(N,\mathbb Q/\mathbb Z)$ be its character module.
Applying the tensor--Hom adjunction to a projective resolution, and using exactness of the character-dual functor, gives
\[
  \operatorname{Ext}^j_R(X,N^+)
  \cong\operatorname{Hom}_{\mathbb Z}
  \bigl(\operatorname{Tor}^R_j(N,X),\mathbb Q/\mathbb Z\bigr)
  \qquad(j\geq0).
\]
For flat $X$, the right side vanishes in degree one, so $N^+$ is cotorsion.
Under the assumed inclusion it has injective dimension at most one.
The formula in degree two, and the cogenerator property of $\mathbb Q/\mathbb Z$, now give $\operatorname{Tor}^R_2(N,X)=0$ for all $N$ and $X$.
Hence $\operatorname{w.gl.dim}R\leq1$.
\end{proof}

\subsection{Dedekind domains}

The next lemma supplies the Ext-orthogonality needed for a countably generated fraction field.

\begin{lemma}\label{lem:orthogonality}
Let $R$ be any ring, and let $P=\varinjlim_{i<\omega}P_i$ be a countably indexed filtered colimit of projective left $R$-modules with monomorphic transition maps.
Then $\operatorname{Ext}^1_R(P,L)=0$ for every $L\in\mathcal{L}_R$.
\end{lemma}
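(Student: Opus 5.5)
We reduce $\operatorname{Ext}^1_R(P,L)$ to a computation with $\operatorname{Hom}$ and $\varprojlim^1$. First, $P$ has projective dimension at most one. The colimit of the chain $P_0\to P_1\to\cdots$ gives the standard short exact sequence
\[
  0\longrightarrow\bigoplus_{i<\omega}P_i\xlongrightarrow{\ \theta\ }\bigoplus_{i<\omega}P_i\longrightarrow P\longrightarrow0,
\]
where $\theta$ sends $x\in P_i$ to $x-\varphi_i(x)$ and $\varphi_i:P_i\to P_{i+1}$ is the transition map. This sequence is a projective resolution of $P$. Applying $\operatorname{Hom}_R(-,L)$ yields an exact sequence
\[
  \prod_i\operatorname{Hom}(P_i,L)\xlongrightarrow{\theta^*}\prod_i\operatorname{Hom}(P_i,L)\longrightarrow\operatorname{Ext}^1_R(P,L)\longrightarrow0.
\]
Hence $\operatorname{Ext}^1_R(P,L)\cong\varprojlim^1_i\operatorname{Hom}_R(P_i,L)$, and it suffices to show that this $\varprojlim^1$ vanishes.

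To control this, use $L=\varprojlim_n(I_n,p_n)$ and the exact sequence from Lemma~\ref{lem:dimension-obstruction}:
\[
  0\longrightarrow L\longrightarrow\prod_n I_n\xlongrightarrow{\delta}\prod_n I_n\longrightarrow0.
\]
Apply $\operatorname{Hom}_R(P,-)$. Since each $\prod_n I_n$ is injective, $\operatorname{Ext}^1_R(P,L)$ is the cokernel of
\[
  \delta_*:\prod_n\operatorname{Hom}(P,I_n)\longrightarrow\prod_n\operatorname{Hom}(P,I_n).
\]
This cokernel is $\varprojlim^1_n\operatorname{Hom}_R(P,I_n)$ for the tower with maps $(p_n)_*$. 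So it is enough to show that the tower $\operatorname{Hom}_R(P,I_{n+1})\to\operatorname{Hom}_R(P,I_n)$ has vanishing $\varprojlim^1$. We check this via Mittag-Leffler, or better, by proving these maps are surjective.

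The key step is surjectivity, and here projective dimension $\leq1$ enters. The kernel $K_n=\ker p_n$ is the kernel of a surjection between injectives, so $\operatorname{id}K_n\leq1$. Moreover $\operatorname{Ext}^1_R(P,K_n)$ is a quotient of $\operatorname{Ext}^1$ into the injective $I_{n+1}$ only after one more step. More precisely, from $0\to K_n\to I_{n+1}\to I_n\to0$ we get
\[
  \operatorname{Hom}(P,I_{n+1})\to\operatorname{Hom}(P,I_n)\to\operatorname{Ext}^1(P,K_n)\to0.
\]
So surjectivity is equivalent to $\operatorname{Ext}^1(P,K_n)=0$, and that is not automatic. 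This is the main obstacle, and I would not aim for surjectivity. Instead I would combine the two descriptions by a double $\varprojlim$ argument: consider the bi-indexed system $\operatorname{Hom}_R(P_i,I_n)$. Since $P_i$ is projective and $p_n$ is surjective, each map $\operatorname{Hom}(P_i,I_{n+1})\to\operatorname{Hom}(P_i,I_n)$ is surjective. Since $I_n$ is injective and $P_i\to P_{i+1}$ is monic, each map $\operatorname{Hom}(P_{i+1},I_n)\to\operatorname{Hom}(P_i,I_n)$ is surjective.

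Thus $\operatorname{Hom}(P_i,I_n)$ is an $\omega\times\omega$ system with all maps surjective in both directions. Its limit along $n$ is $\operatorname{Hom}(P_i,L)$ with vanishing $\varprojlim^1_n$. I would then use the Grothendieck spectral sequence, or the explicit Roos-type sequence for countable towers, computing $\varprojlim^k$ over $\omega\times\omega$ in two orders. Taking $n$ first gives
\[
  \varprojlim^1_{\omega\times\omega}=\varprojlim^1_i\operatorname{Hom}(P_i,L).
\]
Taking $i$ first gives $\varprojlim_i\operatorname{Hom}(P_i,I_n)=\operatorname{Hom}(P,I_n)$ with $\varprojlim^1_i=0$, since the maps are surjective. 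Hence
\[
  \varprojlim^1_{\omega\times\omega}=\varprojlim^1_n\operatorname{Hom}(P,I_n).
\]
This only identifies the two groups, so the real content is to show the total $\varprojlim^1$ vanishes.

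For that I would argue directly with an element. A class in $\varprojlim^1_i\operatorname{Hom}(P_i,L)$ is represented by a family $(g_i)$ with $g_i\in\operatorname{Hom}(P_i,L)$. We must find $h_i\in\operatorname{Hom}(P_i,L)$ with $h_i-h_{i+1}|_{P_i}=g_i$. Build $h_i$ coordinatewise as compatible families $h_i^{(n)}:P_i\to I_n$, by a diagonal recursion on $i+n$. At each stage, extend along the monomorphism $P_i\to P_{i+1}$ using injectivity of $I_n$, and lift along $p_n$ using projectivity of $P_{i+1}$, correcting by elements of $\operatorname{Hom}(P_{i+1},K_n)$. Ensuring both compatibilities simultaneously is the delicate step. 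I expect it to work because a lift along $p_n$ can be adjusted by any map into $K_n$, and such a map extends from $P_i$ to $P_{i+1}$ only up to $\operatorname{Ext}^1(P_{i+1}/P_i,K_n)$. That group vanishes when $P_{i+1}/P_i$ has projective dimension $\leq1$ and $\operatorname{id}K_n\leq1$ is used appropriately. If needed, I would first reduce to the case where each $P_{i+1}/P_i$ has projective dimension at most one, which holds because $P_i,P_{i+1}$ are projective.
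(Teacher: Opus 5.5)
Your reductions are correct, but they only restate the problem. You get $\operatorname{Ext}^1_R(P,L)\cong\varprojlim^1_i\operatorname{Hom}_R(P_i,L)\cong\varprojlim^1_n\operatorname{Hom}_R(P,I_n)$, and, as you note yourself, the double-system discussion merely identifies both with the total $\varprojlim^1$. The vanishing is deferred to your last paragraph, and there the argument fails. A recursion on $i+n$ that chooses every entry $h_i^{(n)}$ must, for each new entry, solve a lifting problem along $p_n$ and an extension problem along $P_i\subseteq P_{i+1}$ for the same map. The obstruction lies in $\operatorname{Ext}^1_R(P_{i+1}/P_i,K_n)$, and your claim that this group vanishes is false: the bounds $\operatorname{pd}\leq1$ and $\operatorname{id}\leq1$ only force $\operatorname{Ext}^{\geq2}=0$. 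For a counterexample, take $R=\mathbb Z$ and $P_i=P_{i+1}=\mathbb Z$ with transition map multiplication by $2$, so $P=\mathbb Z[1/2]$. Take $I_0=\mathbb Q/\mathbb Z$ and $I_1=\mathbb Q$, so $K_0=\mathbb Z$; this is a legitimate instance of the lemma. Then $\operatorname{Ext}^1_{\mathbb Z}(\mathbb Z/2\mathbb Z,\mathbb Z)\cong\mathbb Z/2\mathbb Z\neq0$. So the decisive step is unproved, and the reason offered for it is wrong.

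The missing idea is that nothing off the diagonal needs to be chosen. The maps $\operatorname{Hom}_R(P_{n+1},I_{n+1})\to\operatorname{Hom}_R(P_n,I_n)$, $c\mapsto p_n\circ c|_{P_n}$, are surjective. To see this, first lift through $p_n$ using projectivity of $P_n$, then extend along $P_n\subseteq P_{n+1}$ using injectivity of $I_{n+1}$; the two steps act on different maps, so no obstruction arises. Now, given $(g_i)$, let $\pi_n\colon L\to I_n$ be the limit projections and choose $c_n\in\operatorname{Hom}_R(P_n,I_n)$ recursively with $p_n\circ c_{n+1}|_{P_n}=c_n-\pi_n g_n$. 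For $n\geq i$ put $h_i^{(n)}=c_n|_{P_i}+\sum_{j=i}^{n-1}\pi_n g_j|_{P_i}$, and obtain $h_i^{(n)}$ for $n<i$ by applying the $p$'s. A direct check shows these are compatible in $n$ and satisfy $h_i-h_{i+1}|_{P_i}=g_i$.

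In your language, this is the cofinality of the diagonal in $\omega\times\omega$. Your observation that all maps of the double system are surjective already suffices, because the diagonal maps are composites of them. The paper's proof is exactly this diagonal recursion in module form. It builds only $a_n\colon X_n\to I_n$ on $X_n=q^{-1}(P_n)\cong L\oplus P_n$, lifting on the $P_n$-summand and then extending to $X_{n+1}$. It then reads off the retraction on $X_i$ from the tail $(a_n|_{X_i})_{n\geq i}$.
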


\begin{proof}
Write $L=\varprojlim_{n<\omega}(I_n,p_n)$ with $p_n:I_{n+1}\longrightarrow I_n$ surjective and every $I_n$ an injective module.
Take an arbitrary extension $0\longrightarrow L\longrightarrow X\xlongrightarrow{q}P\longrightarrow0$.
Regard each $P_n$ as a submodule of $P$, and put $X_n=q^{-1}(P_n)$.
Then $X=\bigcup_nX_n$.
Since $P_n$ is projective, each $0\longrightarrow L\longrightarrow X_n\longrightarrow P_n\longrightarrow0$ splits.
Let $\pi_n:L\longrightarrow I_n$ be the limit projections.

\medskip

We then construct maps $a_n:X_n\longrightarrow I_n$ extending $\pi_n$ so that the squares
\[
\xymatrix@C=4em@R=2em{
X_n\ar@{>->}[r]\ar[d]_{a_n}\ar@{-->}[dr]^{b_n}
&X_{n+1}\ar[d]^{a_{n+1}}\\
I_n&I_{n+1}\ar@{->>}[l]_-{p_n}
}
\]
commute. 
Here $a_0$ follows from that $I_0$ is an injective module.
Given $a_n$, then $a_{n+1} : X_{n+1} \longrightarrow I_{n+1}$ is defined as follows. 
Fix the direct sums $X_{n}\cong L\oplus P_{n}$.
On the first summand use $\pi_{n+1}$, and on the second summand lift the second component of $a_n$ through $p_n$, since $P_n$ is a projective module and $p_n$ is surjective.
This gives $b_n:X_n\longrightarrow I_{n+1}$ with $p_nb_n=a_n$ and $b_n|_L=\pi_{n+1}$.
Since $I_{n+1}$ is an injective module, $b_n$ extends to $a_{n+1}:X_{n+1}\longrightarrow I_{n+1}$.
This gives the construction.

\medskip

For each fixed $i$, the restrictions $a_n|_{X_i}$ for $n\geq i$ form a compatible family and hence define a map $s_i:X_i\longrightarrow L$.
These maps agree on $X_i\subseteq X_{i+1}$ and restrict to the identity on $L$, since $a_n|_L=\pi_n$.
Thus they induce a retraction $s:X=\bigcup_iX_i\longrightarrow L$.
Thus the extension splits and $\operatorname{Ext}^1_R(P,L)=0$.
\end{proof}

\begin{lemma}\label{lem:cotorsion-field}
Let $R$ be a Dedekind domain with fraction field $K=\operatorname{Frac}(R)$.
For every $R$-module $C$
\begin{equation}\label{eq:cotorsion-field}
  C\in\operatorname{Cot}(R)\quad\Longleftrightarrow\quad\operatorname{Ext}^1_R(K,C)=0.
\end{equation}
\end{lemma}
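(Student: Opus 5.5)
The forward implication is immediate: $K$ is a flat $R$-module, being a localization of $R$, so $\operatorname{Ext}^1_R(K,C)=0$ whenever $C$ is cotorsion. The work lies in the converse. Assume $\operatorname{Ext}^1_R(K,C)=0$, and let $F$ be an arbitrary flat $R$-module. We must show $\operatorname{Ext}^1_R(F,C)=0$.

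Over a Dedekind domain, flat means torsion-free. The plan is to build an exact sequence
\[
0\longrightarrow F\longrightarrow K\otimes_RF\longrightarrow (K\otimes_RF)/F\longrightarrow 0 .
\]
The map $F\to K\otimes_R F$ is injective precisely because $F$ is torsion-free. Applying $\operatorname{Hom}_R(-,C)$ gives the exact segment
\[
\operatorname{Ext}^1_R(K\otimes_RF,C)\longrightarrow\operatorname{Ext}^1_R(F,C)\longrightarrow\operatorname{Ext}^2_R\bigl((K\otimes_RF)/F,C\bigr).
\]
Since $\operatorname{gldim}R\leq1$, the right-hand term vanishes. It therefore suffices to show $\operatorname{Ext}^1_R(K\otimes_RF,C)=0$.

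The module $K\otimes_RF$ is a $K$-vector space, so it is isomorphic to $K^{(I)}$ for some index set $I$. Hence
\[
\operatorname{Ext}^1_R(K^{(I)},C)\cong\prod_I\operatorname{Ext}^1_R(K,C)=0,
\]
and this finishes the converse.

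The only points requiring care are the two standard facts used over a Dedekind domain. The first is that flat is equivalent to torsion-free; this is what makes $F\to K\otimes_R F$ injective. The second is that $\operatorname{gldim}R\leq1$ for a Dedekind domain; this is what kills the $\operatorname{Ext}^2$ term. I expect no real obstacle. The main thing to verify is the identification of $K\otimes_RF$ with a direct sum of copies of $K$, which holds because $K\otimes_R F$ is a $K$-vector space and any $K$-linear basis gives an $R$-isomorphism with $K^{(I)}$.
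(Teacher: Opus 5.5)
Your proposal is correct and follows essentially the same route as the paper's proof. Both embed $F$ into the $K$-vector space $K\otimes_R F\cong K^{(I)}$, use hereditarity to kill $\operatorname{Ext}^2_R$ of the cokernel, and conclude from $\operatorname{Ext}^1_R(K^{(I)},C)\cong\prod_I\operatorname{Ext}^1_R(K,C)=0$. The only difference is that you justify injectivity of $F\to K\otimes_R F$ via torsion-freeness, whereas the paper tensors $R\hookrightarrow K$ with the flat module $F$.
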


\begin{proof}
  ($\implies$) follows from the definition.

  \medskip

  For the converse ($\impliedby$), take any flat $R$-module $F$.
  Then the canonical map $F \cong F \otimes _RR \longrightarrow K\otimes_R F$ is monic.
  Since $K\otimes_R F$ is a $K$-vector space and hence isomorphic to $K^{(J)}$ as an $R$-module.
  Since the Dedekind domain is hereditary, one has the associated Ext sequence contains
\[
  \operatorname{Ext}^1_R(K^{(J)},C)\longrightarrow\operatorname{Ext}^1_R(F,C)
  \longrightarrow\operatorname{Ext}^2_R(K^{(J)}/F,C)=0.
\]
By assumption $\operatorname{Ext}^1_R(K^{(J)},C)\cong\prod_J\operatorname{Ext}^1_R(K,C)=0$, one has taht $\operatorname{Ext}^1_R(F,C)=0$.
Since $F$ arbitrary chosen, $C$ is cotorsion.
\end{proof}

\begin{lemma}\label{lem:fraction-countability}
  Let $R$ be a Dedekind domain with fraction field $K$.
  Then $|\operatorname{Spec}R|\leq\aleph_0$ if and only if $K$ is at most countably generated as an $R$-module.
\end{lemma}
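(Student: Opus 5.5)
The plan is to compare both conditions with the set $\operatorname{Max}R$ of maximal ideals, and to use the Dedekind structure of $K$ as an $R$-module. Since $\operatorname{Spec}R=\{0\}\cup\operatorname{Max}R$, the condition $|\operatorname{Spec}R|\leq\aleph_0$ is equivalent to $\operatorname{Max}R$ being countable. The case where $R=K$ is a field is trivial, since then $K$ is cyclic and $\operatorname{Spec}R=\{0\}$. So assume $R$ is not a field.

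For ($\Rightarrow$), enumerate the nonzero primes as $\mathfrak p_1,\mathfrak p_2,\ldots$. For each $x\in K$, the fractional ideal $Rx$ has a factorization $\prod\mathfrak p_i^{n_i}$ with only finitely many nonzero exponents. Consider the countable family of invertible fractional ideals
\[
  J_n=(\mathfrak p_1\cdots\mathfrak p_n)^{-n},\qquad n\geq1.
\]
Each $J_n$ is finitely generated, since it is invertible, and $J_1\subseteq J_2\subseteq\cdots$. Given $x=a/b$, the ideal $bR$ involves only finitely many primes with bounded exponents, so $x\in b^{-1}R\subseteq J_n$ for $n$ large. Hence $K=\bigcup_nJ_n$ is countably generated, with generators given by the union of finite generating sets of the $J_n$.

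For ($\Leftarrow$), suppose $K$ is generated by $x_1,x_2,\ldots$, and write $x_k=a_k/b_k$ with $a_k,b_k\in R$ and $b_k\neq0$. Let $\mathfrak m$ be a maximal ideal. By the Chinese remainder theorem, or simply by choosing an element, pick $s\in\mathfrak m\setminus\{0\}$. Then $1/s\in K$, so $1/s=\sum_{k\leq N}r_ka_k/b_k$ for suitable $r_k\in R$. Multiplying by $b_1\cdots b_N$ gives $b_1\cdots b_N\in sR\subseteq\mathfrak m$. Hence $\mathfrak m$ contains some $b_k$.

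Each nonzero $b_k$ lies in only finitely many maximal ideals, since in a Dedekind domain $b_kR$ has a finite prime factorization. Therefore $\operatorname{Max}R\subseteq\bigcup_k\{\mathfrak m:b_k\in\mathfrak m\}$ is a countable union of finite sets, hence countable.

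The only real subtlety is in ($\Rightarrow$): one must make sure the chosen countable family exhausts $K$, which relies on the fact that denominators involve only finitely many primes. Everything else is routine.
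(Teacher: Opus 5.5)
Your proof is correct. The converse direction is the paper's argument unwound. The paper notes that if no $b_n$ lies in $\mathfrak p$, then every generator lies in $R_{\mathfrak p}$, so $K\subseteq R_{\mathfrak p}$; this fails because $1/r\notin R_{\mathfrak p}$ for $0\neq r\in\mathfrak p$. You instead clear denominators in an expression for $1/s$ and use that $\mathfrak m$ is prime. Both then finish by noting that a fixed nonzero element lies in only finitely many primes.

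The forward direction takes a genuinely different route. The paper picks one nonzero $c_{\mathfrak p}\in\mathfrak p$ for each prime and lets $U$ be the countable multiplicative set they generate. For $0\neq b\in R$ with $bR=\prod_j\mathfrak p_j^{e_j}$, the element $u=\prod_j c_{\mathfrak p_j}^{e_j}$ lies in $bR$, so $1/b\in U^{-1}R$. Hence $K=U^{-1}R=\sum_{u\in U}R(1/u)$. This uses only the factorization of principal ideals and elementwise arithmetic. After enumerating $U$, it even writes $K$ as an increasing union of the cyclic free modules $R\,(u_1\cdots u_n)^{-1}$.

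You work with fractional ideals instead, exhausting $K$ by the chain $J_n=(\mathfrak p_1\cdots\mathfrak p_n)^{-n}$. This needs invertibility (hence finite generation) of fractional ideals and the inclusions $\mathfrak p^{-e}\subseteq\mathfrak p^{-n}$ for $e\leq n$. In return, it gives you at once an increasing chain of finitely generated projective submodules with union $K$. That is exactly the form in which the lemma is used in Proposition~\ref{prop:dedekind-criterion} to invoke Lemma~\ref{lem:orthogonality}.

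One bookkeeping detail: if $R$ has only finitely many maximal ideals $\mathfrak p_1,\dots,\mathfrak p_m$, your enumeration stops. In that case take $J_n=(\mathfrak p_1\cdots\mathfrak p_m)^{-n}$; nothing else changes.
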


\begin{proof}
The assertion is immediate if $R$ is a field.

\medskip

($\implies$) Suppose that $|\operatorname{Spec}R|\leq\aleph_0$.
For each nonzero prime ideal $\mathfrak p$, choose $0\ne c_{\mathfrak p}\in\mathfrak p$, and let $U$ be the multiplicative set generated by these elements.
The set $U$ is at most countable.
Given $0\ne b\in R$, factor its principal ideal as $bR=\prod_{j=1}^m\mathfrak p_j^{e_j}$.
Then $u=\prod_{j=1}^m c_{\mathfrak p_j}^{e_j}$ belongs to $bR$, so $\frac1b=\frac{u/b}{u}\in U^{-1}R$.
Hence $K=U^{-1}R=\sum_{u\in U}R(1/u)$ is at most countably generated over $R$.

\medskip

($\impliedby$)
Conversely, suppose first that $K=\sum_{n<\omega}R(a_n/b_n)$, where each $b_n\ne0$.
Fix a nonzero prime ideal $\mathfrak p$.
If no $b_n$ belonged to $\mathfrak p$, then every $a_n/b_n$ would belong to $R_{\mathfrak p}$, and hence $K\subseteq R_{\mathfrak p}$.
This is impossible: for $0\ne r\in\mathfrak p$, one has $1/r\notin R_{\mathfrak p}$.
Thus $\mathfrak p$ contains some $b_n$.
Each nonzero $b_n$ lies in only finitely many prime ideals, by the factorization of $b_nR$ into nonzero prime ideals.
Hence $|\operatorname{Spec}R|\leq\aleph_0$.
\end{proof}

\begin{proposition}\label{prop:stabilization}
Let $R$ be a Dedekind domain and $C$ a cotorsion $R$-module.
Then $D\oplus C\in\mathcal{L}_R$ for some injective $R$-module $D$.
Consequently $\operatorname{Cot}(R)\subseteq\mathcal C_R$.
\end{proposition}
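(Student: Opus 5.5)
The plan is to realize $D\oplus C$ as a pullback, in the spirit of the proof of Theorem~A, but now along a countable tower. I would use two facts about a Dedekind domain $R$ and a cotorsion module $C$.

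\emph{Step 1: structure.} Since $R$ is hereditary and Noetherian, choose an injective hull $C\subseteq E$. Then $E'=E/C$ is injective, and every injective $R$-module is a direct sum of copies of $K$ and of the Prüfer modules $E(R/\mathfrak p)\cong K/R_{\mathfrak p}$. Consequently there is an epimorphism $g:K^{(J)}\longrightarrow E'$ for a suitable set $J$. Since $C$ is cotorsion, Lemma~\ref{lem:cotorsion-field} gives $\operatorname{Ext}^1_R(K^{(J)},C)\cong\prod_J\operatorname{Ext}^1_R(K,C)=0$. Hence $g$ lifts to a map $\tilde g:K^{(J)}\longrightarrow E$. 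It follows that the pullback $E\times_{E'}K^{(J)}$ is isomorphic to $C\oplus K^{(J)}$, and $D=K^{(J)}$ is injective. So it suffices to exhibit $E\times_{E'}D$ as $\varprojlim_n I_n$ with $I_n$ injective and surjective connecting maps.

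\emph{Step 2: the tower.} As in the proof of Theorem~A, I would look for an $\omega$-indexed tower $(H_n,h_n)$ of injective modules with the following properties:
\begin{itemize}
  \item the connecting maps $h_n$ are surjective;
  \item there are compatible split epimorphisms $q_n:H_n\longrightarrow E'$;
  \item $\varprojlim H_n\cong D$, and the induced map $\varprojlim H_n\longrightarrow E'$ is $g$.
\end{itemize}
Then $I_n=E\times_{E'}H_n$ is an extension of the injective module $\ker q_n$ by $E$, so it is injective. The induced maps $I_{n+1}\longrightarrow I_n$ are surjective, because the $h_n$ are surjective and compatible with the $q_n$. Since limits commute with pullbacks,
\[
\varprojlim_n I_n\cong E\times_{E'}\varprojlim_n H_n\cong E\times_{E'}D\cong C\oplus D .
\]
A natural candidate is the following. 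Let $F=\ker g$, which is flat, and use its injective coresolution $0\to F\to K^{(J)}\to K^{(J)}/F\to 0$. Build $H_n=D\oplus Q_n$, where $(Q_n)$ is a tower of injectives with surjective maps chosen, like the telescope in Theorem~B, so that $\varprojlim Q_n=0$. The maps $q_n$ then combine $g$ with the components coming from $Q_n$, so that each $q_n$ becomes split. Concretely, I would take $Q_n=E'^{(\omega)}$-type modules (coinduced as in \eqref{eq:copresentation} if needed) with shift maps. The shift gives surjectivity, and the extra coordinates provide a splitting of $q_n$.

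\emph{Main obstacle.} The hard step is arranging that each $q_n$ is split, so that $\ker q_n$, and hence $I_n$, is injective, while the limit is still exactly $D$ mapping by $g$. The obvious choice $q_n=g$ fails, since $\ker g=F$ is only flat. The limit must ``forget'' the auxiliary splitting summands, and I expect this to require the $\varprojlim^1$-vanishing and the surjectivity built into the shift tower. I would first try the shift construction and verify, via the exact sequence $0\to\varprojlim\to\prod\to\prod\to\varprojlim^1\to0$ from Lemma~\ref{lem:dimension-obstruction}, that the auxiliary parts contribute nothing to the limit. The inclusion $\operatorname{Cot}(R)\subseteq\mathcal C_R$ then follows at once, since $C$ is a direct summand of $D\oplus C\in\mathcal L_R$.
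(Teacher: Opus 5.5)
Your Step~1 and the formal half of Step~2 are sound. Indeed $E\times_{E'}K^{(J)}\cong C\oplus K^{(J)}$, and a tower $(H_n,h_n,q_n)$ with your three properties would finish the proof via the pullbacks $I_n$. The gap is that no such tower is produced, and the candidate you sketch cannot exist. In an $\omega$-indexed tower with surjective connecting maps, the limit maps onto every term: choose preimages recursively, as in the proof of Lemma~\ref{lem:dimension-obstruction}. So $\varprojlim_n Q_n=0$ forces $Q_n=0$ for all $n$. Nonzero towers with epimorphic maps and vanishing limit need an uncountable index, which is exactly why Theorem~A is indexed by $\omega_1$ (cf.\ Remark~\ref{rem:countable-split}). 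With $Q_n=0$ you are back at $q_n=g$, which you already saw fails.

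More fundamentally, the reduction is circular.
\begin{itemize}
\item The modules $\ker q_n$ are injective, being direct summands of $H_n$.
\item The induced maps $\ker q_{n+1}\to\ker q_n$ are surjective: lift through $h_n$ and use $q_{n+1}=q_nh_n$.
\item $\varprojlim_n\ker q_n=\ker g=F$.
\end{itemize}
Conversely, if $F=\varprojlim_nF_n$ with $F_n$ injective and epimorphic maps, the pushouts of $F_n\leftarrow F\hookrightarrow K^{(J)}$ give exactly the tower you want. So Step~2 is equivalent to $F\in\mathcal L_R$. That is an instance of the statement you are proving, now for a torsion-free module.

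For the obvious choice of $g$ this is false. Take $R=\mathbb Z$, $C=\mathbb Z/p$, $E=E'=\mathbb Z(p^\infty)$ and $g:\mathbb Q\to\mathbb Z(p^\infty)$; then $F=\mathbb Z_{(p)}$. Here $\operatorname{Ext}^1_{\mathbb Z}(\mathbb Q,\mathbb Z_{(p)})\neq0$, since it is the quotient of the uncountable group $\operatorname{Hom}(\mathbb Q,\mathbb Z(p^\infty))$ by the image of the countable group $\operatorname{Hom}(\mathbb Q,\mathbb Q)$. Since $\mathbb Q=\bigcup_n\frac1{n!}\mathbb Z$, Lemma~\ref{lem:orthogonality} then gives $\mathbb Z_{(p)}\notin\mathcal L_{\mathbb Z}$.

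The missing idea is to make $C$ appear as the $\varprojlim^1$-term of a tower of submodules of a divisible module, rather than as a pullback. The paper's construction runs as follows.
\begin{itemize}
\item Write $C\cong P/V$, where $P=G^\omega$ for a free module $G$ and $G^{(\omega)}\subseteq V$. This is where cotorsion is used: an epimorphism $G\to C$ extends over $G^\omega/G^{(\omega)}$, whose quotient by the diagonal copy of $G$ is torsion-free, hence flat.
\item Filter $V$ by the tail submodules $V_n$, so that $\varprojlim_nV/V_n\cong P$.
\item The tower $(K\otimes_RV)/V_n$ consists of divisible, hence injective, modules with surjective maps. Its limit $L$ sits in $0\to K\otimes_RV\to L\to P/V\cong C\to0$, which splits.
\end{itemize}
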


\begin{proof}
  Fix such $C$ and choose an epimorphism $s:F\longrightarrow C$ with $F$ free.
  Put $P=F^\omega$, $U=F^{(\omega)}$ and $H=P/U$.
The diagonal map $j:F\longrightarrow H$ is injective.
Note that the cokernel $H/j(F)$ is torsion-free.
Indeed, if $0\ne r\in R$ and $r[(x_n)]=j(f)$, then $rx_n=f$ for all sufficiently large $n$.
Multiplication by $r$ is injective on $F$, so the $x_n$ are eventually equal and $[(x_n)]\in j(F)$.
Over a Dedekind domain, torsion-free modules are flat.
Thus $\operatorname{Ext}^1_R(H/j(F),C)=0$, and the Hom--Ext sequence extends $s$ to a map $\varphi:H\longrightarrow C$ satisfying $\varphi j=s$.
In particular, $\varphi$ is surjective.

\medskip

Let $V$ be the kernel of $P\longrightarrow H\xlongrightarrow{\varphi}C$, and set
\[
  V_n=\{v\in V:v_0=\cdots=v_{n-1}=0\}\qquad(n<\omega).
\]
We have $U\subseteq V$, $P/V\cong C$, and $V_0=V$.
Projection to the first $n$ coordinates identifies $V/V_n$ with $F^n$; every finite prefix has a representative in $U\subseteq V$.
Consequently
\begin{equation}\label{eq:completion}
  \bigcap_n V_n=0,\qquad
  \varprojlim_n V/V_n\cong P,
\end{equation}
where the natural map from $V$ to this limit agrees with its original inclusion in $P$.

\medskip

Put $K=\operatorname{Frac}(R)$ and embed $V$ in the $K$-vector space $D=K\otimes_R V$.
Over a Dedekind domain, divisible modules are injective, so $D$ and every $D/V_n$ are injective.
The quotient maps form an tower.
Put $L=\varprojlim_n D/V_n$.
By \eqref{eq:completion}, the natural map $D\longrightarrow L$ is injective, and the kernel of the structural projection $L\longrightarrow D/V$ is $\varprojlim_n V/V_n\cong P$.
The composite $D\longrightarrow L\longrightarrow D/V$ is the quotient map, so $L=D+P$.
Moreover, $D\cap P=V$: both embeddings agree on $V$, and any element of $D\cap P$ maps to zero in $D/V$.
Therefore $L/D\cong P/V\cong C$.
Since $D$ is injective, $0\longrightarrow D\longrightarrow L\longrightarrow C\longrightarrow0$ splits, and $L\cong D\oplus C$.
\end{proof}

The converse of Proposition~\ref{prop:stabilization} fails.

\begin{proposition}\label{prop:uncountable-spectrum}
Let $R$ be a Dedekind domain with uncountable spectrum and fraction field $K$, and put $B=R^\omega+K^{(\omega)}\subseteq K^\omega$.
Then $B\in\mathcal{L}_R$, but $\operatorname{Ext}^1_R(K,B)\ne0$.
Consequently $\operatorname{Cot}(R)\subsetneq\mathcal C_R$.
\end{proposition}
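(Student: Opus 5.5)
We prove the two assertions separately: first an explicit tower exhibiting $B\in\mathcal L_R$, then a Hom--Ext computation for $\operatorname{Ext}^1_R(K,B)\neq0$. The final consequence is then formal: $B\in\mathcal L_R\subseteq\mathcal C_R$ but $B\notin\operatorname{Cot}(R)$, while $\operatorname{Cot}(R)\subseteq\mathcal C_R$ by Proposition~\ref{prop:stabilization}.

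\emph{The tower.} For $n<\omega$ put
\[
I_n=K^n\oplus (K/R)^{(\omega\setminus n)}.
\]
This is injective. Over a Dedekind domain, divisible modules are injective, and over a Noetherian ring direct sums of injectives are injective. Define $p_n:I_{n+1}\to I_n$ to be the identity on the first $n$ coordinates, the quotient map $K\to K/R$ on coordinate $n$, and the identity on the tail. Each $p_n$ is clearly surjective. A compatible family is determined by a sequence $(x_k)\in K^\omega$, where $x_k$ is the coordinate $k$ of $y_n$ for any $n>k$. The only constraint is that for each $n$ the tail $(x_k+R)_{k\geq n}$ lies in the direct sum, i.e.\ $x_k\in R$ for almost all $k$. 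These sequences form exactly $R^\omega+K^{(\omega)}=B$, so $B\cong\varprojlim_n I_n\in\mathcal L_R$.

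\emph{The Ext computation.} We use the exact sequence $0\to R^\omega\to B\to B/R^\omega\to 0$, where $B/R^\omega\cong K^{(\omega)}/R^{(\omega)}\cong (K/R)^{(\omega)}$. Since $R$ is hereditary and $(K/R)^{(\omega)}$ is injective, $\operatorname{Ext}^1_R(K,(K/R)^{(\omega)})=0$. Hence
\[
\operatorname{Ext}^1_R(K,B)\cong\operatorname{coker}\Bigl(\operatorname{Hom}_R\bigl(K,(K/R)^{(\omega)}\bigr)\xrightarrow{\ \partial\ }\operatorname{Ext}^1_R(K,R)^\omega\Bigr),
\]
and $\partial$ is computed coordinatewise by the connecting maps $\partial_0:\operatorname{Hom}_R(K,K/R)\to\operatorname{Ext}^1_R(K,R)$.

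Next, $\operatorname{Ext}^1_R(K,R)\neq0$. Applying $\operatorname{Hom}_R(K,-)$ to $0\to R\to K\to K/R\to0$ and using $\operatorname{Hom}_R(K,R)=0$ identifies $\operatorname{Ext}^1_R(K,R)$ with $\operatorname{Hom}_R(K,K/R)/K$. This quotient is nonzero because $R$ is not a field: already the $\mathfrak p$-primary projection $K\to K/R_{\mathfrak p}$ for one nonzero prime $\mathfrak p$ is not multiplication by an element of $K$.

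Consequently, if every homomorphism $g:K\to (K/R)^{(\omega)}$ lands in a finite subsum, then the image of $\partial$ lies in $\operatorname{Ext}^1_R(K,R)^{(\omega)}$. The cokernel then surjects onto $\operatorname{Ext}^1_R(K,R)^\omega/\operatorname{Ext}^1_R(K,R)^{(\omega)}\neq0$.

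\emph{Main obstacle.} The step I expect to be hardest is showing that every $g:K\to (K/R)^{(\omega)}$ factors through a finite subsum; this is where uncountability of $\operatorname{Spec}R$ must enter. I would argue by contradiction. Suppose infinitely many components $g_n$ are nonzero. Put $K_m=g^{-1}\bigl((K/R)^{m}\bigr)$; these form a strictly increasing chain with $\bigcup_m K_m=K$. Each nonzero $g_n:K\to K/R$ has divisible image, hence its kernel is a submodule $N\subseteq K$ with $K/N$ a nonzero torsion divisible module. My plan is to show that such a countable exhaustion forces $K$ to be countably generated. First pick $k_m\in K_{m+1}\setminus K_m$. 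Then show, using the primary decomposition $K/R\cong\bigoplus_{\mathfrak p}K/R_{\mathfrak p}$, that only countably many primes can be ``seen'' by the $g_n$. For the remaining primes, $K_m$ should contain all $\mathfrak p$-denominators, which would make $K$ generated by countably many fractions. Lemma~\ref{lem:fraction-countability} then gives the contradiction.

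If this direct route stalls, the fallback is to construct a non-liftable $g$ more concretely. The idea is to pick countably many distinct primes $\mathfrak p_n$ and aim the $n$-th component of a map $K\to (K/R)^\omega/(K/R)^{(\omega)}$ at the $\mathfrak p_n$-primary part. One would then check, again via Lemma~\ref{lem:fraction-countability}, that no $a\in K^\omega$ realizes it modulo $B$.
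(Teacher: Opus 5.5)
Your tower $I_n=K^n\oplus(K/R)^{(\omega\setminus n)}$ is correct and gives a self-contained proof of $B\in\mathcal L_R$; the paper instead cites Bergman. Your identification $\operatorname{Ext}^1_R(K,B)\cong\operatorname{coker}\partial$, with $\partial$ computed coordinatewise, is also right. (Minor point: $\operatorname{Ext}^1_R(K,R)\neq0$ does not follow merely from $R$ not being a field, since it vanishes for a complete DVR. Your witness does work, because $R$ has a second nonzero prime.)

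The genuine gap is the step you flag as the main obstacle. It is false that every $g\colon K\to(K/R)^{(\omega)}$ factors through a finite subsum. In fact it fails over every Dedekind domain that is not a field, so it cannot be where uncountability enters. Fix a nonzero prime $\mathfrak p$ and $\pi\in\mathfrak p\setminus\mathfrak p^2$. Every nonzero element of $K$ has finite $\mathfrak p$-adic valuation, so $K=\bigcup_n\pi^{-n}R_{\mathfrak p}$, and the following map is well defined:
\[
  g\colon K\longrightarrow\bigoplus_{n<\omega}K/R_{\mathfrak p},\qquad x\longmapsto\bigl(\pi^nx+R_{\mathfrak p}\bigr)_{n<\omega}.
\]
Since $K/R_{\mathfrak p}\cong U_{\mathfrak p}=T_{\mathfrak p}/R$ is a summand of $K/R$ (with $T_{\mathfrak p}=\bigcup_m\mathfrak p^{-m}$), this is a map $K\to(K/R)^{(\omega)}$. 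Each $g_n$ is nonzero and vanishes on $T_{\mathfrak q}$ for $\mathfrak q\neq\mathfrak p$. If $g_n$ were $x\mapsto cx+R$, then $c\in R$ and $c\,\mathfrak q^{-m}\subseteq R$ for all $m$, so $c\in\bigcap_m\mathfrak q^m=0$, contradicting $g_n\neq0$. Hence $[g_n]\neq0$ in $\operatorname{Ext}^1_R(K,R)$ for every $n$, and $\partial(g)\notin\operatorname{Ext}^1_R(K,R)^{(\omega)}$. So the image of $\partial$ is not contained in the direct sum, and your surjection onto $\operatorname{Ext}^1_R(K,R)^\omega/\operatorname{Ext}^1_R(K,R)^{(\omega)}$ is unavailable.

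The same chain $\pi^{-m}R_{\mathfrak p}$ also refutes your heuristic that a countable strictly increasing exhaustion of $K$ forces countable generation. Such an exhaustion always exists, yet $K$ is not countably generated when $\operatorname{Spec}R$ is uncountable, by Lemma~\ref{lem:fraction-countability}. Your fallback does not help as stated either. If the $n$-th component is the $\mathfrak p_n$-primary projection, the resulting map $K\to(K/R)^\omega/(K/R)^{(\omega)}$ is zero, because $x+R$ has only finitely many nonzero primary components.

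What is missing is a specific element of $\operatorname{Ext}^1_R(K,R)^\omega$ outside $\operatorname{im}\partial$, together with the argument where uncountability is actually used. Let $e_{\mathfrak p_0}\colon K\to K/R\to U_{\mathfrak p_0}\subseteq K/R$ be the primary projection, and take the constant sequence $([e_{\mathfrak p_0}],[e_{\mathfrak p_0}],\ldots)$. Suppose it equals $\partial(g)$. Then $g_n(x)=e_{\mathfrak p_0}(x)+a_nx+R$ for some $a_n\in K$.

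For each $\mathfrak q\neq\mathfrak p_0$, fix $z\in\mathfrak q^{-1}\setminus R$; note that $e_{\mathfrak p_0}$ vanishes at $1$ and at $z$. Since $g(1)$ and $g(z)$ have finite support, $a_n\in(R:z)=\mathfrak q$ for all $n\geq N_{\mathfrak q}$. There are uncountably many $\mathfrak q$ but only countably many values of $N_{\mathfrak q}$, so infinitely many $\mathfrak q$ share a common $N$. A nonzero element of $R$ lies in only finitely many maximal ideals, so $a_n=0$ for large $n$. Then $g_n=e_{\mathfrak p_0}$ for almost all $n$, and $g(x)$ has infinite support whenever $e_{\mathfrak p_0}(x)\neq0$, a contradiction.

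This is exactly the paper's argument. The paper phrases it through $\operatorname{Ext}^1_R(K/R,B)\cong\prod_{\mathfrak p}\operatorname{Ext}^1_R(U_{\mathfrak p},B)$ and the class concentrated at $\mathfrak p_0$ with value $\delta_{\mathfrak p_0}(\mathbf 1)$.
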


\begin{proof}
The modules $K$ and $K/R$ are injective, and direct sums of injectives are injective over the Noetherian ring $R$.
Applying \cite[Corollary~11(c)]{Bergman} to $0\longrightarrow R\longrightarrow K\longrightarrow K/R\longrightarrow0$ gives $B\in\mathcal L_R$.

\medskip

Let $\Omega=\operatorname{spec} R \setminus \{0\}$ be the uncountable set of maximal ideals of $R$.
For $\mathfrak p\in\Omega$, put
\[
  T_{\mathfrak p}=\bigcup_{m\geq0}\mathfrak p^{-m}\subseteq K,
  \qquad U_{\mathfrak p}=T_{\mathfrak p}/R.
\]
The primary decomposition of the torsion module $K/R$ gives $K/R=\bigoplus_{\mathfrak p\in\Omega}U_{\mathfrak p}$.
Apply $\mathrm{Hom}_R(-, B)$ to $0\longrightarrow R\longrightarrow T_{\mathfrak p}\longrightarrow U_{\mathfrak p}\longrightarrow0$ yields a boundary map $\delta_{\mathfrak p}:B\longrightarrow\operatorname{Ext}^1_R(U_{\mathfrak p},B)$.
We claim that
\begin{equation}\label{eq:eventual-prime}
  \delta_{\mathfrak p}(b)=0
  \quad\Longrightarrow\quad
  b_n\in\mathfrak p\text{ for all sufficiently large }n.
\end{equation}
Indeed, such vanishing extends $1\longmapsto b$ to a map $h:T_{\mathfrak p}\longrightarrow B$.
Choose $z\in\mathfrak p^{-1}\setminus R$.
Since $\mathfrak p$ is invertible, one has $z$ with $(R:z)=\mathfrak p$.
For $x\in T_{\mathfrak p}$, multiplication by a nonzero denominator shows that $h(x)_n=xb_n$ in $K$.
Since $b,h(z)\in B$, both $b_n$ and $zb_n=h(z)_n$ lie in $R$ for all sufficiently large $n$.
Thus $b_n\in(R:z)= \{a \in R \mid az \in R\} =\mathfrak p$, proving \eqref{eq:eventual-prime}.
In particular, $\delta_{\mathfrak p}(\mathbf1)\ne0$ for $\mathbf1=(1,1,\ldots)$.

\medskip

Applying $\operatorname{Hom}_R(-,B)$ to $0\longrightarrow R\longrightarrow K\longrightarrow K/R\longrightarrow0$ gives the exact sequence
\begin{equation}\label{eq:primary-ext}
  B\xlongrightarrow{\delta}
  \prod_{\mathfrak p\in\Omega}\operatorname{Ext}^1_R(U_{\mathfrak p},B)
  \longrightarrow\operatorname{Ext}^1_R(K,B)\longrightarrow0,
\end{equation}
where the $\mathfrak p$-coordinate of $\delta$ is $\delta_{\mathfrak p}$: pulling back the sequence along $U_{\mathfrak p}\longrightarrow K/R$ gives $0\longrightarrow R\longrightarrow T_{\mathfrak p}\longrightarrow U_{\mathfrak p}\longrightarrow0$.
Fix $\mathfrak p_0\in\Omega$ and define an element $\eta=(\eta_{\mathfrak p})_{\mathfrak p\in\Omega}$ of the product in \eqref{eq:primary-ext} by
\[
  \eta_{\mathfrak p}=
  \begin{cases}
    \delta_{\mathfrak p_0}(\mathbf1),&\mathfrak p=\mathfrak p_0,\\
    0,&\mathfrak p\ne\mathfrak p_0.
  \end{cases}
\]
If $\eta=\delta(b)$, then for each $\mathfrak p\ne\mathfrak p_0$, \eqref{eq:eventual-prime} gives an integer $N_{\mathfrak p}$ such that $b_n\in\mathfrak p$ for all $n\geq N_{\mathfrak p}$.
The countably many sets $\{\mathfrak p\ne\mathfrak p_0:N_{\mathfrak p}=N\}$, indexed by $N<\omega$, cover the uncountable set $\Omega\setminus\{\mathfrak p_0\}$.
Hence one of these sets is infinite, indexed by $N$.
Since $b_n\in R$ for sufficiently large $n$ and a nonzero element of $R$ lies in only finitely many maximal ideals, $b_n=0$ for all sufficiently large $n$.
But then $b\in K^{(\omega)}$, and $x\longmapsto(xb_n)_n$ extends $1\longmapsto b$ to every $T_{\mathfrak p}$.
Thus $\delta(b)=0$, contradicting $\eta\ne0$.
The class of $\eta$ in \eqref{eq:primary-ext} proves $\operatorname{Ext}^1_R(K,B)\ne0$.
Since $K$ is flat, $B\notin\operatorname{Cot}(R)$.
As $B \in \mathcal{L}_R \subseteq \mathcal{C}_R$, one has $\operatorname{Cot}(R)\subsetneq\mathcal C_R$.
\end{proof}

\begin{proposition}\label{prop:dedekind-criterion}
Let $R$ be a Dedekind domain with fraction field $K$.
Then the following are equivalent:
\[
  \mathcal C_R=\operatorname{Cot}(R)
  \quad\Longleftrightarrow\quad
  |\operatorname{Spec}R|\leq\aleph_0
  \quad\Longleftrightarrow\quad
  K\text{ is at most countably generated over }R.
\]
Moreover, $\operatorname{Cot}(R)\subseteq\mathcal C_R$ for every Dedekind domain $R$.
\end{proposition}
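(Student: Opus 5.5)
The "moreover" clause is exactly Proposition~\ref{prop:stabilization}, and the equivalence of the second and third conditions is Lemma~\ref{lem:fraction-countability}. It remains to show that $\mathcal C_R=\operatorname{Cot}(R)$ holds if and only if $|\operatorname{Spec}R|\leq\aleph_0$. Since $\operatorname{Cot}(R)\subseteq\mathcal C_R$ always holds, the equality amounts to the inclusion $\mathcal C_R\subseteq\operatorname{Cot}(R)$.

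For the forward direction, argue by contraposition. If $\operatorname{Spec}R$ is uncountable, then $R$ is not a field. Proposition~\ref{prop:uncountable-spectrum} then produces $B\in\mathcal L_R\subseteq\mathcal C_R$ with $\operatorname{Ext}^1_R(K,B)\neq0$. Since $K$ is flat, $B$ is not cotorsion, so $\mathcal C_R\neq\operatorname{Cot}(R)$.

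For the converse, assume $|\operatorname{Spec}R|\leq\aleph_0$. By Lemma~\ref{lem:fraction-countability} we can write $K=\sum_{n<\omega}R(1/u_n)$; in fact the proof of that lemma gives the generators $1/u$ for $u$ in the countable multiplicative set $U$. Let $x_n=1/(u_0u_1\cdots u_{n-1})$. Then $K=\bigcup_n Rx_n$ is a countable increasing union: each $1/u_k$ lies in $Rx_{k+1}$, and $x_n=u_nx_{n+1}\in Rx_{n+1}$. Each $Rx_n\cong R$ is free, and the inclusions are monomorphisms. Lemma~\ref{lem:orthogonality} therefore gives $\operatorname{Ext}^1_R(K,L)=0$ for every $L\in\mathcal L_R$.

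Now let $C$ be a direct summand of some $L\in\mathcal L_R$. Since $\operatorname{Ext}^1_R(K,-)$ is additive, $\operatorname{Ext}^1_R(K,C)=0$ as well. Lemma~\ref{lem:cotorsion-field} then shows that $C$ is cotorsion. Hence $\mathcal C_R\subseteq\operatorname{Cot}(R)$, and equality follows.

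The only point requiring care is presenting $K$ as a countable chain of free submodules, so that Lemma~\ref{lem:orthogonality} applies. The cumulative-denominator trick above handles this. The field case is trivial: $K=R$ is free, and every module over a field is both injective and cotorsion.
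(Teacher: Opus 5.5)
Your proposal is correct and follows essentially the same route as the paper. You use Proposition~\ref{prop:stabilization} and Lemma~\ref{lem:fraction-countability} for the easy parts, Proposition~\ref{prop:uncountable-spectrum} for the necessity of countability, and Lemmas~\ref{lem:orthogonality} and~\ref{lem:cotorsion-field}, passed to direct summands, for sufficiency. The only difference is cosmetic: you write $K$ as an increasing union of free cyclic submodules $Rx_n$ using cumulative denominators, whereas the paper uses finitely generated torsion-free, hence projective, submodules $P_n$.
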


\begin{proof}
Proposition~\ref{prop:stabilization} gives
$\operatorname{Cot}(R)\subseteq\mathcal C_R$ for every Dedekind domain, 
and Lemma~\ref{lem:fraction-countability} gives the second ($\Longleftrightarrow$).
It remains to prove the first ($\Longleftrightarrow$).

\medskip

Suppose $|\operatorname{Spec}R|\leq\aleph_0$.
Then $K$ is a countable increasing union of finitely generated torsion-free submodules $P_n$, each of which is projective.
Lemma~\ref{lem:orthogonality} gives $\operatorname{Ext}^1_R(K,L)=0$ for every $L\in\mathcal L_R$.
The same vanishing holds for direct summands of such $L$.
Lemma~\ref{lem:cotorsion-field} therefore gives $\mathcal C_R\subseteq\operatorname{Cot}(R)$, and the first paragraph gives equality.

\medskip

Conversely, suppose $\mathcal C_R=\operatorname{Cot}(R)$.
If the spectrum were uncountable, Proposition~\ref{prop:uncountable-spectrum} would give a module in $\mathcal L_R\subseteq\mathcal C_R$ that is not cotorsion, a contradiction.
Hence $|\operatorname{Spec}R|\leq\aleph_0$.
\end{proof}

\subsection{Proof of Theorem~C}

\begin{proof}[Proof of Theorem~C]
Suppose first that $\mathcal C_R=\operatorname{Cot}(R)$.
Lemma~\ref{lem:dimension-obstruction} gives $\operatorname{w.gl.dim}R\leq1$.
For every ideal $J$, dimension shifting in $0\longrightarrow J\longrightarrow R\longrightarrow R/J\longrightarrow0$ shows that $J$ is flat.
Since $R$ is Noetherian, $J$ is a finitely presented flat module and hence projective.
Thus every ideal is projective, so $R$ is hereditary and $\operatorname{gldim}R\leq1$.
Recall that a commutative Noetherian hereditary ring is a finite product $R\cong R_1\times\cdots\times R_t$ of Dedekind domains, with field factors allowed; see \cite[Section~16.5, p.~399]{GT}.
Under this decomposition, injectivity, flatness, inverse limits and direct summands are computed componentwise, as is Ext.
Hence $\mathcal C_{R_i}=\operatorname{Cot}(R_i)$ for every $i$.
Proposition~\ref{prop:dedekind-criterion} makes every $\operatorname{Spec}R_i$ countable, proving necessity.

\medskip

Conversely, if $\operatorname{gldim}R\leq1$ and $\operatorname{Spec}R$ is countable, the same decomposition has Dedekind factors with countable spectra.
Proposition~\ref{prop:dedekind-criterion}, applied componentwise, gives $\mathcal C_R=\operatorname{Cot}(R)$.
\end{proof}

\end{document}